\let\rom=\textup
\newtheorem{Theorem}{Theorem}[section]
\newtheorem{Lemma}{Lemma}[section]
\newtheorem{Proposition}{Proposition}[section]
\theoremstyle{remark}
\newcommand{\be}{\begin{equation}}
\newcommand{\ee}{\end{equation}}
\newcommand{\ddd}{\mathrm{d}}
\newcommand{\pd}[2]{\frac{\partial#1}{\partial#2}}
\newcommand{\weg}[1]{}
\begin{document}

\title{On the Linearization of Certain Singularities\\of Nijenhuis Operators}
\author{A. Yu.~Konyaev$^{*,1}$}
\affil{$^*$Faculty of Mechanics and Mathematics, Moscow State University, Moscow,
Russia\\
$^{**}$Moscow Center for Fundamental and Applied Mathematics, 119992, Moscow, Russia\\
$^1$E-mail: {\tt maodzund@ya.ru}
}

\date{Received December 20, 2023; Revised December 20, 2023; Accepted December 25, 2023}

\maketitle

\begin{abstract}
We consider a linearization problem for Nijenhuis operators in dimension two
around a point of scalar type in analytic category. The problem was almost
completely solved in \cite{nij2}. One case, however, namely the case of
left-symmetric algebra $\mathfrak b_{1, \alpha}$, proved to be difficult. Here
we solve it and, thus, complete the solution of the linearization problem for
Nijenhuis operators in dimension two. The problem turns out to be related to
classical results on the linearization of vector fields and their monodromy
mappings.

\medskip
\noindent {\bf DOI} 10.1134/S106192084010084

\end{abstract}

\begin{fulltext}

\section{Introduction}

Let $\mathfrak a$ be an algebra (finite- or infinite-dimensional) with an
operation $\star$ over the field of $\mathbb R$ or $\mathbb C$. The associator
is a trilinear operation, defined by $\mathcal A(\xi, \eta, \zeta) = (\xi \star
\eta) \star \zeta - \xi \star (\eta \star \zeta)$ for arbitrary triple $\xi,
\eta, \zeta \in \mathfrak a$. The associator identically vanish if and only if
the algebra $\mathfrak a$ is associative. If $\mathcal A(\xi, \eta, \zeta) =
\mathcal A(\eta, \xi, \eta)$, i.e., the associator is symmetric in the first
two indices, then the algebra $\mathfrak a$ is called a left-symmetric algebra.

Left-symmetric algebras were introduced independently by Vinberg \cite{vin} and
Kozul \cite{koz} in purely algebraic context. Later they have occurred in the
theory of infinite-dimensional integrable systems [3--5]. See \cite{burde2} for
overview on the variety of applications in algebra, geometry, and mathematical
physics.

The essential property of left-symmetric algebras is that the commutator $[\xi,
\eta] = \xi \star \eta - \eta \star \xi$ satisfies the Jacobi identity and,
thus, defines a Lie algebra structure. We call the corresponding algebra an
associated Lie algebra.

Any finite-dimensional algebra over $\mathbb R$ or $\mathbb C$ can be treated
as an affine space, equipped with operator field $R_\eta \xi = \xi \star \eta$,
where $\eta \in \mathfrak a$ and $\xi \in T_\eta \mathfrak a \cong \mathfrak
a$.

\begin{Theorem}[\cite{win, nij2}]\label{t0}
A finite-dimensional algebra $\mathfrak a$ over $\mathbb R$ or $\mathbb C$ is a
left-symmetric algebra if and only if the Nijenhuis torsion of $R$ \rom(we omit
the dependence on the point\/\rom)
$$
\mathcal N_R (\xi, \eta) = R[R\xi, \eta] + R[\xi, R \eta] - [R\xi, R \eta]
- R^2[\xi, \eta]
$$
vanishes for all vector fields $\xi, \eta$.
\end{Theorem}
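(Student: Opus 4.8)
The plan is to prove both directions by direct computation, exploiting the linearity of $R_\eta$ in $\eta$. First I would fix a basis $e_1,\dots,e_n$ of $\mathfrak a$ with structure constants $e_i \star e_j = \sum_k c^k_{ij} e_k$, so that in the corresponding affine coordinates $(x^1,\dots,x^n)$ on $\mathfrak a$ the operator field $R$ has components $R^k_j(x) = \sum_i c^k_{ij} x^i$, i.e.\ $R$ is linear in the point. Because of this linearity, the coordinate vector fields $\partial_i$ are $R$-related in a controlled way: $R\partial_j = \sum_{i,k} c^k_{ij} x^i \partial_k$, and brackets of the resulting vector fields reduce to ordinary matrix/structure-constant manipulations. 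The key reduction is that $\mathcal N_R$ is tensorial (it is $C^\infty$-linear in $\xi$ and $\eta$ — this is the standard fact that the Nijenhuis torsion of an operator field is a $(1,2)$-tensor), so it suffices to evaluate $\mathcal N_R(\partial_p,\partial_q)$ at the origin and at an arbitrary point, and to show this vanishes identically iff the left-symmetry identity holds.

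Next I would carry out that evaluation. Plugging $\xi=\partial_p$, $\eta=\partial_q$ into $\mathcal N_R(\xi,\eta)=R[R\xi,\eta]+R[\xi,R\eta]-[R\xi,R\eta]-R^2[\xi,\eta]$, the last term drops since $[\partial_p,\partial_q]=0$. Each of the remaining brackets is computed from $R\partial_p = \sum c^k_{ip}x^i\partial_k$; for instance $[R\partial_p,\partial_q] = -\partial_q\bigl(\sum_{i,k}c^k_{ip}x^i\bigr)\partial_k = -\sum_k c^k_{qp}\partial_k$, a constant vector field, so $R[R\partial_p,\partial_q] = -\sum_{k}c^k_{qp} R\partial_k = -\sum_{i,k,l} c^k_{qp}c^l_{ik}x^i\partial_l$. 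Similarly $[R\partial_p, R\partial_q]$ expands via the Leibniz rule into two groups of terms: those where the derivative hits the coefficient of the first factor and those where it hits the second. Collecting everything and reading off the coefficient of $x^i\partial_l$, the vanishing of $\mathcal N_R$ becomes, for all $i,p,q,l$, an identity among quadratic expressions in the $c$'s. I expect this identity to be exactly the componentwise form of $\mathcal A(\partial_i,\partial_p,\partial_q) = \mathcal A(\partial_p,\partial_i,\partial_q)$, i.e.\ $\sum_k\bigl(c^k_{ip}c^l_{kq} - c^k_{pq}c^l_{ik}\bigr)$ being symmetric in $i \leftrightarrow p$ — which is precisely left-symmetry, since $\mathcal A(\xi,\eta,\zeta)$ has components $\sum_k(c^k_{\eta\zeta}c^l_{\xi k} - \dots)$ in the natural convention.

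To close the argument I would note that both conditions are evaluated on a basis and both sides are tensorial/multilinear in their arguments, so vanishing on basis vectors at every point is equivalent to vanishing for all vector fields (respectively, for all algebra elements); the point-dependence is only through the single factor $x^i$, and matching coefficients of $x^i$ for each $i$ is what produces the full family of algebraic identities. The main obstacle I anticipate is purely bookkeeping: keeping the index placement consistent between the "left-symmetric" convention for the associator (which of the three slots carries the point, and hence which pair of indices must be symmetrized) and the sign/index conventions hidden in the definition of $\mathcal N_R$ and in the Lie bracket of vector fields. Once the dictionary between "coefficient of $x^i\partial_l$ in $\mathcal N_R(\partial_p,\partial_q)$" and "$l$-component of $\mathcal A(e_i,e_p,e_q)-\mathcal A(e_p,e_i,e_q)$" is pinned down, both implications follow simultaneously from the same computation, which is why the theorem is stated as an "if and only if."
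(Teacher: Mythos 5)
The paper does not prove Theorem \ref{t0} at all --- it is imported from \cite{win, nij2} --- so there is no in-paper argument to compare against; judged on its own, your plan is the standard direct computation and it does close. Evaluating $\mathcal N_R$ on coordinate fields (the $R^2[\xi,\eta]$ term dropping, $[R\partial_p,\partial_q]$ becoming a constant field, $[R\partial_p,R\partial_q]$ splitting via Leibniz) yields exactly $\mathcal N_R(\partial_p,\partial_q)=\mathcal A(e_q,e_p,x)-\mathcal A(e_p,e_q,x)$, so both implications follow at once. The one concrete point to fix is the index placement you yourself flag: the paper's $R_\eta\xi=\xi\star\eta$ is \emph{right} multiplication by the point, so the components are $R^k_j(x)=\sum_i c^k_{ji}x^i$, not $\sum_i c^k_{ij}x^i$ as you wrote; with your written convention $R$ is left multiplication by the point and the identical computation produces symmetry of the associator in the \emph{last} two slots (right-symmetry) rather than the first two. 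Since the two conventions differ by transposing the structure constants, this is only bookkeeping, but it is precisely the place where the argument would silently prove the wrong statement if left unresolved.
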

The right-hand side of the above formula defines a tensor of type $(1, 2)$,
which was introduced in \cite{nijenhuis}. The operator field with vanishing
Nijenhuis torsion is called a Nijenhuis operator.

Theorem \ref{t0} establishes that the left-symmetric algebras play the same
role in Nijenhuis geometry as Lie algebras play in Poisson geometry: recall
that Lie algebras are in one-to-one correspondence with Poisson tensors on an
affine space that are homogeneous and linear in coordinates.

In \cite{wein}, Weinstein introduces the linearization problem for Poisson
brackets, vanishing at a point. Following the similar line of thought, one
might introduce the linearization problem for Nijenhuis operators.

Consider a manifold $\mathrm M^n$ and a Nijenhuis operator $R$ with the
property that, at a given point $\mathrm p$, it is proportional to a scalar,
i.e., $\lambda_0 \operatorname{Id}$ for some constant $\lambda_0$. We call such
points singular points of scalar type. The Taylor series for $R$ around
$\mathrm p$ is
$$
R = \lambda_0 \operatorname{Id} + R_1 + \dots
$$
In \cite{nij2} it is shown that $R_1$ is Nijenhuis and defines on $T_{\mathrm
p} \mathrm M^n$ a natural (i.e., independent of the coordinates!) structure of
left-symmetric algebra. In given coordinates $x^1, \dots, x^n$, the structure
constants of this algebra are given by $a_{ij}^k = \pd{R^k_i}{x^j} (\mathrm
p)$.

We say that $R$ is linearizable if there exists a coordinate change such that
$R = \lambda_0 \operatorname{Id} + R_1$. We say that a left-symmetric algebra
$\mathfrak a$ is nondegenerate if, for any Nijenhuis operator $R$ with linear
part $R_1$ given by $\mathfrak a$, the linearization around $\mathrm p$ exists.
The algebra is called degenerate otherwise. The question can be posed for
formal, analytic, and smooth operator fields and coordinate changes.

Notice that the existence of nondegenerate left-symmetric algebras is not
obvious even in the formal category: in the Weinstein case, the formal
linearization heavily relied on the existence of a Lie group. We do have a Lie
group (the one corresponding to the associated Lie algebra), but it is little
or does not help. For example, as we see below, the linearization problem can
be formulated and, in certain cases, solved for commutative associated Lie
algebras (see \cite{nij1} for a special case in arbitrary dimension).

In \cite{nij2}, this problem of finding nondegenerate left-symmetric algebras
was approached in dimension two. First, the classification of left-symmetric
algebras in dimension two was obtained. Here we give only the corresponding
Nijenhuis operators $R$, which linearly depend on the coordinates $x, y$:
\begin{equation}\label{twodim}
\begin{aligned}
& \mathfrak b_{1, \alpha}:
\left(\begin{array}{cc} 0 & x \\ 0 & \alpha y\\ \end{array}\right),
\quad \begin{array}{l}
     \mathfrak b_{2, \beta}\\
     \beta \neq 0
\end{array}:
\left(\begin{array}{cc} y & \big(1 - \frac{1}{\beta}\big) x \\
0 & y\\ \end{array}\right), \quad \mathfrak b_3:
\left(\begin{array}{cc} 0 & x + y \\ 0 & y\\ \end{array}\right), \\
& \mathfrak b_{4}^+:
\left(\begin{array}{cc} 0 & -x \\ x & - 2y\\ \end{array}\right),
\quad \mathfrak b_4^-:
\left(\begin{array}{cc} 0 & - x \\ - x & -2y\\ \end{array}\right),
\quad \mathfrak b_5: \left(\begin{array}{cc} y & y \\ 0 & y\\ \end{array}\right),
\quad \mathfrak c_0: \left(\begin{array}{cc} 0 & 0 \\ 0 & 0\\ \end{array}\right), \\
& \mathfrak c_2: \left(\begin{array}{cc} 0 & 0 \\ 0 & y\\ \end{array}\right),
\quad \mathfrak c_3: \left(\begin{array}{cc} 0 & y \\ 0 & 0\\ \end{array}\right),
\quad \mathfrak c_4: \left(\begin{array}{cc} y & x \\ 0 & y\\ \end{array}\right),
\quad \mathfrak c_5^+: \left(\begin{array}{cc} y & x \\ x & y\\ \end{array}\right),
\quad \mathfrak c_5^-: \left(\begin{array}{cc} y & x \\ - x & y\\ \end{array}\right).
\end{aligned}
\end{equation}
The letter $\mathfrak c$ stands for the commutative associated Lie algebra and
$\mathfrak b$ for the noncommutative one (up to an isomorphism, there are
exactly two Lie algebras).

Second, in the smooth category, the algebras from the list \eqref{twodim} were
classified in terms of degeneracy/nondegeneracy. Define $\Sigma_{\mathrm {sm}}$
as a union of three sets: $\{\alpha \vert \alpha \leq 0\}$, $\{r \vert r \in
\mathbb N, r \geq 3\}$ and $\{1/m \vert m \in \mathbb N, r \geq 2\}$. The
result (Theorem~1.3 in~\cite{nij2}) is as follows.
$$
\begin{aligned}
&\text{Degenerate}: \mathfrak c_1, \mathfrak c_2, \mathfrak c_3,
\mathfrak c_4, \mathfrak b_5, \mathfrak b_{2, \beta}
\mathfrak b_{1, \alpha} \, \text{\rm{for}} \,  \alpha \in \Sigma_{\mathrm{sm}}, \\
& \text{Nondegenerate}: \mathfrak b^+_4, \mathfrak b^-_4, \mathfrak c^+_5,
\mathfrak c^-_5, \mathfrak b_3, \mathfrak b_{1, \alpha} \, \text{\rm{for}} \,
\alpha \notin \Sigma_{\mathrm{sm}}.
\end{aligned}
$$
We see that, in the smooth case, the classification was complete. At the same
time, in the analytic case, only a partial classification was obtained. Let
$[q_0, q_1, q_2,\dots]$ be a decomposition of an irrational number $\alpha$
into the continuous fraction. If the series
\begin{equation*}
B(x) = \sum \limits_{i = 0}^{\infty}\frac{\operatorname{log}q_{i + 1}}{q_i}
\end{equation*}
converges, then $\alpha$ is a \textbf{Brjuno number}. We denote the set of
negative irrational numbers which are not Brjuno numbers by $\Sigma_u$. The
Lebesgue measure of $\Sigma_{\mathrm{u}}$ is zero. In addition, denote  the
union of negative rational numbers, zero, $\{r \vert r \in \mathbb N, r \geq
3\}$, and $\{1/m \vert m \in \mathbb N, r \geq 2\}$ by $\Sigma_{\mathrm{an}}$.
Notice, that $\Sigma_{\mathrm{an}} \subset \Sigma_{\mathrm{sm}}$. Then Theorem
1.4 of~\cite{nij2} yields:
$$
\begin{aligned}
&\text{Degenerate}: \mathfrak c_1, \mathfrak c_2, \mathfrak c_3,
\mathfrak c_4, \mathfrak b_5, \mathfrak b_{2, \beta}
\mathfrak b_{1, \alpha} \, \text{\rm{for}} \,  \alpha \in \Sigma_{\mathrm{an}}, \\
& \text{Nondegenerate}: \mathfrak b^+_4, \mathfrak b^-_4, \mathfrak c^+_5,
\mathfrak c^-_5, \mathfrak b_3, \mathfrak b_{1, \alpha} \, \text{\rm{for}} \,
\alpha \notin \Sigma_{\mathrm{an}} \cup \Sigma_{\mathrm u}, \\
& \text{Unknown}: \mathfrak b_{1, \alpha} \, \text{\rm{for}} \,
\alpha \in \Sigma_{\mathrm{u}}.
\end{aligned}
$$
The main purpose of this paper is to close the aforementioned gap. The main
result is as follows.

\begin{Theorem}\label{t1}
The left-symmetric algebra $\mathfrak b_{1, \alpha}$ is degenerate for $\alpha
\in \Sigma_{\mathrm u}$ in the analytic category.
\end{Theorem}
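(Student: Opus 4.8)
The approach is to translate the linearization problem for $R$, when $\alpha$ is negative irrational, into the classical problem of analytic linearizability of a germ of a holomorphic diffeomorphism of $(\mathbb C,0)$ with rotation number $\alpha$, realized as the holonomy of a singular foliation of saddle type, and then to invoke Yoccoz's theorem on non-Brjuno numbers. Two preliminary remarks are in order. Since $\alpha$ is irrational there are no resonance relations between $1$ and $\alpha$, so every Nijenhuis operator with linear part $\mathfrak b_{1,\alpha}$ is formally linearizable by Poincar\'e--Dulac; the content of Theorem \ref{t1} is therefore entirely about divergence, which is why it is stated in the analytic category. Also, subtracting a constant multiple of the identity, we may assume $\lambda_0 = 0$.

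Next I would attach to $R$ a geometric invariant: the operator $R$ has a non-constant eigenvalue $\lambda_2 = \alpha y + \dots$, whose eigenfoliation $\mathcal F$ is a germ of holomorphic foliation with a singularity at $\mathrm p$ of linear part the non-resonant saddle $x\partial_x + \alpha y\partial_y$. The elementary but decisive point is that any analytic coordinate change carrying $R$ to $\mathfrak b_{1,\alpha}$ also carries $\mathcal F$ to the corresponding eigenfoliation of the linear model, namely $\langle x\partial_x + \alpha y\partial_y\rangle$; hence if $R$ is analytically linearizable, so is $\mathcal F$. It therefore suffices to exhibit, for each $\alpha\in\Sigma_{\mathrm u}$, an analytic Nijenhuis operator with linear part $\mathfrak b_{1,\alpha}$ whose eigenfoliation $\mathcal F$ is \emph{not} analytically linearizable.

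To construct it, fix $\alpha\in\Sigma_{\mathrm u}$. By Yoccoz's theorem the germ $z\mapsto e^{2\pi\ii\alpha}z + z^2$ is not analytically linearizable; by the realization of holonomies for non-resonant saddles (Mattei--Moussu) it is the holonomy along a separatrix of a germ $\mathcal F$ of holomorphic foliation with a saddle singularity of linear part $x\partial_x + \alpha y\partial_y$, and since the holonomy of the linear saddle is linear, this $\mathcal F$ is not analytically linearizable. After an analytic change of coordinates the two separatrices of $\mathcal F$ are the coordinate axes, so a generator of $\mathcal F$ reads $A\partial_x + B\partial_y$ with $A = x + \dots$ and $B = y\,u$, where $u$ is holomorphic with $u(\mathrm p) = \alpha$, hence a unit; I then set
\begin{equation*}
R \;=\; \begin{pmatrix} 0 & \alpha A/u \\ 0 & \alpha y \end{pmatrix}.
\end{equation*}
Now $\alpha A/u = x + \dots$ is holomorphic (the potential pole along $\{y = 0\}$, where $\langle\partial_x\rangle$ and $\mathcal F$ become tangent, cancels), the linear part of $R$ is $\mathfrak b_{1,\alpha}$, and $R$ has eigenvalues $0,\alpha y$ with eigenfoliations $\langle\partial_x\rangle,\mathcal F$. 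Off $\{y=0\}$ the operator is semisimple, so by the identity $\mathcal N_R(v_1,v_2) = (\lambda_2-\lambda_1)\big((v_2\lambda_1)\,v_1 + (v_1\lambda_2)\,v_2\big)$ for two-dimensional semisimple operators --- which here vanishes because $\lambda_1\equiv 0$ and $\partial_x(\alpha y) = 0$ --- it is Nijenhuis there, hence everywhere, as $\mathcal N_R$ is an analytic tensor vanishing on a dense open set. As the $\lambda_2$-eigenfoliation of $R$ is $\mathcal F$, linearizing $R$ would linearize $\mathcal F$, a contradiction; thus $\mathfrak b_{1,\alpha}$ is degenerate for $\alpha\in\Sigma_{\mathrm u}$.

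The main obstacle I anticipate is the junction of the two categories: verifying that the a priori singular object $\alpha y\,P_{\mathcal F}$, with $P_{\mathcal F}$ the projection onto $\mathcal F$ along $\langle\partial_x\rangle$, is an honest analytic Nijenhuis operator across $\{y=0\}$ (where the eigenvalues collide and the eigenfoliations are tangent) --- this is exactly why one must straighten the separatrices first, so that $y\mid B$ and the quotient $\alpha A/u$ is holomorphic, and why one uses that $\mathcal F$ is tangent to the horizontal foliation precisely along $\{y=0\}$. For a fully symmetric statement one would also want the converse --- that, up to analytic equivalence, every analytic Nijenhuis operator with linear part $\mathfrak b_{1,\alpha}$ comes from such a foliation, so that linearizability of $R$ and of $\mathcal F$ are equivalent --- but only the implication used above is needed here. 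The genuinely hard analytic input, that a non-Brjuno rotation number forces non-linearizability, is imported in full from Yoccoz.
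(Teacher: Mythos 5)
Your proof is correct, and its hard analytic core is the same as the paper's: both rest on Yoccoz's theorem that a non‑Brjuno $\alpha$ admits a non‑linearizable germ $z\mapsto e^{2\pi \mathrm{i}\alpha}z+\dots$, on realizing that germ as the holonomy of a separatrix of a saddle foliation with linear part $x\partial_x+\alpha y\partial_y$, and on the invariance of the holonomy's conjugacy class under analytic equivalence of foliations. Where you genuinely diverge is the reduction from operators to foliations. The paper first establishes a formal triangular normal form (Proposition \ref{p1}), uses Artin's approximation theorem to produce an analytic eigenvalue, and shows that in eigenvalue coordinates every such $R$ takes the canonical form \eqref{prep2}, so that any linearizing change is forced to be triangular and acts on the last column as a change of variables on a vector field. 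You instead build only the one counterexample degeneracy requires, directly from the realized foliation, and replace the entire normal‑form/Artin apparatus by the observation that the eigenfoliation of the non‑constant eigenvalue (which is exactly the span of the last column, i.e., the $\alpha y$‑eigenvector) is a natural invariant, so linearizing $R$ would conjugate $\mathcal F$ to the linear saddle. This is more economical, at the price of having to check by hand that your $R$ is holomorphic across $\{y=0\}$ and Nijenhuis — which you do correctly via straightened separatrices and the semisimple torsion identity plus analytic continuation. One cosmetic caveat: your side remark that formal linearizability follows ``by Poincar\'e--Dulac'' is not literally an application of that theorem (it concerns vector fields, not operator fields); the correct statement does hold, via the triangular normal form and the absence of resonances for irrational $\alpha$, but nothing in your argument depends on it.
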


This completes the classification problem of left-symmetric algebras in
dimension two for analytic and smooth category. The author is grateful to
Yu.~Ilyashenko for the important ideas for the proof and attention to this work. The work is supported by grant Russian Science Foundation, RScF 24-21-00450.


\section{Proof of Theorem \ref{t1}}

Recall that the Fr\"ohlicher--Nijenhuis bracket of the operator fields is given
by the formula
\begin{equation}\label{fn}
\begin{aligned}[]
 [[R, Q]] (\xi, \eta) = Q[R\xi, \eta] + & R[\xi, Q\eta]
 - [R\xi, Q\eta] - RQ[\xi, \eta] \\
+ & R[Q\xi, \eta] + Q[\xi, R\eta] - [Q\xi, R\eta] - QR[\xi, \eta]
\end{aligned}
\end{equation}
This is a tensor of type $(1, 2)$ which is skew-symmetric in the lower indices.
There are several obvious properties that are crucial for our proof.
\begin{enumerate}
    \item If the entries of $R$ are homogeneous polynomials of degree $k$ and
    the entries of $Q$ are homogeneous polynomials of degree $l$, then the
    entries of $[[R, Q]]$ are homogeneous polynomials of degree $k + l - 1$.
    This follows immediately from the formula
    \item $[[R, Q]] = [[Q, R]]$ and $[[R, R]] = 2 \mathcal N_R$.
\end{enumerate}
Now, let us proceed with a proof.

Consider a two-dimensional manifold $\mathrm M^2$ equipped with Nijenhuis
operator $R$. Let $\mathrm p \in \mathrm M^2$ be such that $R = \lambda_0
\operatorname{Id}$ at this point. The statement we are proving is local in
nature, so we may assume that $\mathrm M^2 = D^2$, i.e., the two-dimensional
disk. The Taylor composition at this point is $R = \lambda_0 \operatorname{Id}
+ R_1 + \dots$. Here $R_1$ is the linear part associated with $\mathfrak b_{1,
\alpha}$ for $\alpha \in \Sigma_{\mathrm u}$.

\begin{Proposition}\label{p1}
There exists formal coordinates $x, y$ around $\mathrm p$ such that, in these
coordinates,
\begin{equation}\label{prep}
R = \left(\begin{array}{cc}
     \lambda_0 & 0 \\
     0 & \lambda_0
\end{array}\right) + \left(\begin{array}{cc}
     0 & p(x, y) \\
     0 & q(y)
\end{array}\right)
\end{equation}
with $p(x, y) = x + \{\text{terms of order $>$ 1}\}$ and $q(y) = \alpha y +
\{\text{terms of order $>$ 1}\}$.
\end{Proposition}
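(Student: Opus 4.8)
The plan is to achieve the normal form \eqref{prep} in two stages, both purely formal. First I would bring the operator into block-triangular form with a prescribed kernel direction, and then I would normalize the linear parts of the remaining entries. Since $R_1$ corresponds to $\mathfrak b_{1,\alpha}$, in the original coordinates $R = \lambda_0\operatorname{Id} + R_1 + \dots$ with $R_1 = \left(\begin{smallmatrix} 0 & x \\ 0 & \alpha y\end{smallmatrix}\right)$. The key structural fact I would exploit is that $S := R - \lambda_0\operatorname{Id}$ is itself a Nijenhuis operator (this follows from property 2 of the Fr\"ohlicher--Nijenhuis bracket: $[[R,R]] = [[S,S]]$ since $\operatorname{Id}$ is parallel, so $\mathcal N_S = \mathcal N_R = 0$). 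The matrix $S(\mathrm p) = 0$, and its linear part has the single nonzero eigenvalue direction; the idea is that the generalized kernel distribution of $S$ is integrable and gives a Nijenhuis-invariant foliation.

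First I would show $S$ has a one-dimensional eigenfoliation. At $\mathrm p$, $S_1 = R_1$ has rank one with image and kernel both spanned by $\partial_x$; more precisely $\ker S_1 = \operatorname{span}(\partial_x)$ along the line $\alpha y = 0$, but what matters is the eigenvalue function $\mu := \operatorname{tr} S = q$-type entry, which vanishes to first order along $y=0$. Using the standard structure theory of Nijenhuis operators in dimension two (the splitting/foliation results underlying the classification \eqref{twodim}), the level sets of the nonconstant eigenvalue of $S$, together with the kernel of $S - \mu\operatorname{Id}$, furnish coordinates in which $S$ is upper triangular: $S = \left(\begin{smallmatrix} 0 & * \\ 0 & \mu(y)\end{smallmatrix}\right)$ with $\mu(y) = \alpha y + O(y^2)$, where the new $y$ is chosen as (a normalization of) that eigenvalue and the new $x$ is a coordinate along the fibers of the foliation. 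This is the step where I expect the real work to lie: one must verify that the Nijenhuis condition forces the $(2,1)$-entry to vanish identically and the $(1,1)$-entry to vanish identically in these adapted coordinates, and that the $(1,2)$-entry $p$ depends on both variables with linear part exactly $x$ after rescaling $x$. Concretely, writing $S = \left(\begin{smallmatrix} a & b \\ c & d\end{smallmatrix}\right)$ and imposing $\mathcal N_S = 0$ as a system of PDEs on $a,b,c,d$, one analyzes it order by order in the Taylor expansion; the linear part already has $a=c=0$, and an induction on degree — using that $[[S,S]]$ raises homogeneous degree by one and that the degree-$(k+1)$ part of $\mathcal N_S$ is a linear operator on the degree-$k$ Taylor coefficients of $S$ whose kernel is controlled by the spectrum $\{0,\alpha\}$ of $S_1$ (here the irrationality, indeed non-resonance, of $\alpha$ enters to kill obstructions) — propagates $a\equiv c\equiv 0$ to all orders.

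Having reached $S = \left(\begin{smallmatrix} 0 & \tilde p \\ 0 & \tilde q\end{smallmatrix}\right)$ with $\tilde q = \tilde q(x,y)$ a priori, the remaining step is to show $\tilde q$ depends on $y$ alone and that $\tilde p$ can be normalized so its linear part is $x$. For the first point, the Nijenhuis equation for an upper-triangular $S$ of this shape reduces (a short direct computation from \eqref{fn}) to $\tilde q \,\partial_x \tilde q = 0$ together with a relation expressing $\partial_x \tilde p$ and $\partial_y$-derivatives, forcing $\partial_x \tilde q = 0$, i.e. $\tilde q = q(y)$ with $q(y) = \alpha y + O(y^2)$. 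For the second point, I would make a further formal change $x \mapsto \phi(x,y)$ fixing the foliation $y = \const$: since $\tilde p$ has linear part $x$ (the $\partial_x$-coefficient of $R_1$'s action), one chooses $\phi$ so that in the new coordinate the $(1,2)$-entry becomes $x + \{\text{order} > 1\}$; the residual Nijenhuis constraint on the pair $(p,q)$ is automatically preserved because coordinate changes act by conjugation and $\mathcal N$ is a tensor. Assembling these gives \eqref{prep}. The main obstacle, as noted, is the inductive verification that the off-foliation entries $a,c$ vanish to all formal orders — this is where one must use both the Nijenhuis identity and the non-resonance of $\alpha$, and it is the technical heart of the proposition; everything after it is formal bookkeeping and a triangular change of variables.
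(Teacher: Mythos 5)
Your overall strategy (a formal, order-by-order reduction to triangular form, with the irrationality of $\alpha$ killing the obstructions) is the same one the paper follows, but the central inductive step is wrong as you state it --- and it is exactly the step you yourself flag as the technical heart. You claim that, once the terms of degree $<k$ are triangular, the degree-$(k+1)$ part of $\mathcal N_S=0$ is a linear constraint on the degree-$k$ coefficients of $S$ whose kernel, controlled by the spectrum $\{0,\alpha\}$ of $S_1$, forces the entries $a$ and $c$ to vanish. This cannot be true, because the Nijenhuis condition is coordinate-invariant: conjugating the triangular normal form by the change $\bar u=u$, $\bar v=v+u^2$ yields a Nijenhuis operator with the same linear part whose quadratic part acquires the increment $R_1J-JR_1$ with $J=\left(\begin{smallmatrix}0&0\\2u&0\end{smallmatrix}\right)$, i.e.\ nonzero $(1,1)$- and $(2,1)$-entries $2u^2$ and $2\alpha uv$. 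So the Nijenhuis identity at degree $k$ pins down $R_k$ only modulo the image of $J\mapsto R_1J-JR_1$, and one must \emph{interleave} a homogeneous degree-$k$ coordinate change with the Nijenhuis constraint at every step of the induction. That is what the paper does: a change $\bar u=u+f$, $\bar v=v+g$ with $g$ chosen so that $a$ reduces to $a_0v^k$, after which the two scalar equations extracted from $[[R_k,R_1]]=0$ force $a_0=0$, then $c=0$ and $d_u=0$, the last two via the non-vanishing of $i+1+\alpha(k-i-1)$ for irrational $\alpha$. Your argument has no mechanism for absorbing the coboundary part of $R_k$, so the induction does not close.

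A secondary gap: for the initial triangularization you invoke the ``standard splitting/eigenfoliation'' theory of two-dimensional Nijenhuis operators. Those results require separated eigenvalues, whereas $\mathrm p$ is precisely a point of scalar type where both eigenvalues of $S=R-\lambda_0\operatorname{Id}$ vanish and the eigendirection for the nonzero eigenvalue is singular at $\mathrm p$ (already for $S_1$ it is spanned by $x\partial_x+\alpha y\partial_y$, which vanishes at the origin); moreover, the fact that the other eigenvalue of $S$ is \emph{identically} zero, implicit in the target form \eqref{prep}, is itself part of the conclusion. So the adapted coordinates cannot be imported from the regular theory; they must come out of the same order-by-order computation. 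Your stage-two observations --- that for a triangular $S$ the Nijenhuis condition reduces to $q_xp=q_xq=0$, forcing $q_x=0$, and that the linear part of $p$ can then be normalized to $x$ by a change preserving the foliation --- are correct and consistent with the paper.
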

\begin{proof}
First, we can replace $R$ by $R - \lambda_0 \operatorname{Id}$. The resulting
operator is Nijenhuis, and it is enough to prove the statement of the
proposition for such operators only. Take coordinates $u, v$ around $\mathrm p$
such that the linear part $R_1$ of the Taylor decomposition is in the form
given in Table \eqref{twodim}. We introduce
$$
L_i = \left(\begin{array}{cc}
     0 & l_i(u, v)  \\
     0 & m_i(u)
\end{array}\right), \quad R_k = \left(\begin{array}{cc}
     a(u, v) & b(u, v)  \\
     c(u, v) & d(u, v)
\end{array}\right),
$$
where $l_i, m_i $ are homogeneous polynomials of degree $i$ and $a, b, c, d$
are homogeneous polynomials of degree $k$. Assume that the Taylor decomposition
of $R$ at $\mathrm p$ is in the form $R = R_1 + L_2 + \dots + L_{k - 1} + R_k +
\dots$, where $R_k$ is in the aforementioned form. Note that $L_i
\partial_{u} = R_1 \partial_u = 0$. Due to formula \eqref{fn}, we obtain
$$
[[R_i, L_i]](\partial_u, \partial_v) = [[L_i, L_j]] (\partial_u, \partial_v) = 0
$$
In dimension two, this implies that both brackets vanish identically. The first
property of the Fr\"ohlicher--Nijenhuis bracket takes the form
$$
0 = [[R, R]] = [[R_k, R_1]] + \text{terms of the order $> k$}
$$
By the direct computation,
$$
\begin{aligned}
 [[R_k, R_1]](\partial_u, \partial_v)
 =& R_1[\partial_u, R_k \partial_v]  + R_1[R_k \partial_u, \partial_v]
 + R_k [\partial_u, R_1\partial_v] - [R_k \partial_u, R_1 \partial_v] \\
 = & R_1[\partial_u, b \partial_u + d \partial_v] + R_1[a \partial_u
 + c \partial_v, \partial_v] + R_k [\partial_u, u\partial_u
 + \alpha v \partial_v] - [a\partial_u + c \partial_v, u \partial_u
 + \alpha v \partial_v] \\
 = & u d_u \partial_u + \alpha v d_u \partial_v - u c_v \partial_u
 - \alpha v c_v \partial_v  + a \partial_u + c \partial_v - (a - u a_u
 - \alpha v a_v) \partial_u  \\
 &-  (b - u b_u - \alpha v b_v) \partial_v = 0.
\end{aligned}
$$
Gathering the similar terms, we obtain the system of two equations
\begin{equation}\label{base}
\begin{aligned}
  u d_u - u c_v + u a_u + \alpha v a_v &= 0, \\
  \alpha v d_u + u c_u + (1 - \alpha) c &= 0.
\end{aligned}
\end{equation}
The next is a standard approach to a formal linearization. Consider the
coordinate change in the form $\bar u = u + f(u, v), \bar u = v = g(u, v)$,
where $f, g$ are homogeneous polynomials of degree $k$. The inverse coordinate
change $u(\bar u, \bar v), v(\bar u, \bar v)$ is a formal series in the form $u
= \bar u + \{\text{terms of order $\geq k$}\}$ and $v = \bar v + \{\text{terms
of order $\geq k$}\}$.

The next step is to recalculate the operator field in the new coordinates. We
first recalculate
$$
(\operatorname{Id} + J)^{-1} R (\operatorname{Id} + J)
= R_1 + L_2 + \dots + (R_k - J R_1 + R_1 J) + \dots.
$$
Here
$$
J = \left( \begin{array}{cc}
     f_u & f_v  \\
     g_u & g_v  \\
\end{array}\right).
$$
And then one needs to substitute $u (\bar u, \bar v), v (\bar u, \bar v)$ into
the formula. We omit the bar and see that $R_k$ is changed as
$$
R_k = \left(\begin{array}{cc}
     a - u g_u & \star  \\
     c - \alpha v g_u & \star
\end{array}\right)
$$
The terms we denoted as $\star$ are transformed in a more complicated manner.
Let $a = \sum_{i = 0}^k a_i u^i v^{k - i}$, where $a_i$ are constants. Taking
$g = \sum_{i = 1}^{k} \frac{1}{i} a_i u^i v^{k - i}$, we see that $a - u g_u =
a_0 v^k$. Thus, we may assume that $a$ depends only on $v$. The first equation
of \eqref{base} takes the form
$$
u(d_u - c_v) + \alpha k a_0 v^{k} = 0.
$$
We know that $(d_u - c_v)$ is a polynomial of degree $k - 1$ and $u(d_u - c_v)$
does not contain the term $v^k$. This implies that the polynomial identically
vanish if $a_0 = 0$. Thus, $a = 0$ and $d_u = c_v$. Substituting the last
equation into  the second identity of \eqref{base} for $c = \sum_{i = 0}^k c_i
u^i v^{k - i}$, we obtain
$$
u c_u + \alpha v c_v + (1 - \alpha c)
= \sum_{i = 0}^k c_i \big(i + 1 + \alpha (k - i - 1)\big) u^i v^{k - i} = 0.
$$
Since $\alpha$ is irrational and $i + 1 > 0$, we see that the equation in the
brackets cannot be zero. Thus, $c_i = 0$ and $c = 0$ as well. This implies that
$d_u = 0$ and we see that $R_k = L_k$.

Repeating this process and taking the composition of the countable number of
coordinate changes, we obtain a formal coordinate change, which transforms $R$
into the form \eqref{prep}. Proposition \ref{p1} is proved.
\end{proof}

Consider the equation
\begin{equation}\label{eq}
\operatorname{det}(R - \alpha \mu \operatorname{Id}) = 0.
\end{equation}
This is an analytic equation with respect to $\mu$. In Proposition \ref{p1},
the formal series $\frac{1}{\alpha}q(y)$, written as $\frac{1}{\alpha}q(y(u,
v))$ in initial coordinates, yields a formal series for an eigenvalue of $R$ in
the initial coordinates $u, v$. Thus, equation \eqref{eq} has a formal
solution.

Due to Artin's theorem \cite{artin}, there exists an analytical solution, for
which one can choose the first term to coincide with formal one. We see that
there exists an analytic function $\mu(u, v) = v + \dots$ for which $\alpha
\mu$ is an eigenvalue of $R$ and $\ddd \mu \neq 0$ at the coordinate origin.

\begin{Lemma}\label{lm1}
In any coordinates $\lambda, \mu$, where $\mu$ is a solution of \eqref{eq} with
$\ddd \mu \neq 0$, the operator field $R$ has the form
\begin{equation}\label{prep2}
R = \left(\begin{array}{cc}
     \lambda_0 & 0 \\
     0 & \lambda_0
\end{array}\right) + \left(\begin{array}{cc}
     0 & h(\lambda, \mu) \\
     0 & \alpha \mu
\end{array}\right),
\end{equation}
where $h(\lambda, \mu) = \lambda + \{\text{terms of higher order}\}$.
\end{Lemma}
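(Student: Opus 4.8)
\noindent\emph{Sketch of proof.} The plan is to read the normal form \eqref{prep2} off Proposition \ref{p1} by an essentially formal computation, and then to promote the resulting identities from the formal to the analytic category; the point is that $R$ and $\mu$ are both analytic.

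First I would reduce to the case $\lambda_0=0$: replacing $R$ by $R-\lambda_0\operatorname{Id}$ preserves the Nijenhuis property and equation \eqref{eq} (it merely shifts every eigenvalue by $-\lambda_0$), and the assertion of the lemma is unaffected by this reduction. By Proposition \ref{p1} there are formal coordinates $x,y$ in which $R$ takes the form \eqref{prep} with $\lambda_0=0$, $p=x+\dots$, $q=\alpha y+\dots$. Any solution $\mu$ of \eqref{eq} with $\ddd\mu\ne 0$ at $\mathrm p$ has, in these coordinates, formal Taylor series $\tfrac 1\alpha q(y)$ --- it is the only eigenvalue of $R$ whose differential does not vanish at $\mathrm p$ --- so, formally, $\mu$ is a function of $y$ alone and its level curves coincide with the orbits of $\partial_x$.

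Second, I would extract the triangular structure. In the formal coordinates $R\partial_x=0$, hence $R$ annihilates every vector tangent to a level curve of $\mu$. Take now any analytic coordinate system whose second coordinate is $\mu$, and call the first coordinate $\lambda$. The coordinate vector field $\partial_\lambda$ is tangent to $\{\mu=\mathrm{const}\}$, so the analytic vector field $R\partial_\lambda$ has identically vanishing Taylor expansion at $\mathrm p$; since an analytic tensor field with zero Taylor series at a point vanishes near that point, $R\partial_\lambda\equiv 0$. Thus the first column of the matrix of $R$ in the coordinates $\lambda,\mu$ is zero, so $R$ is lower triangular, $\det R\equiv 0$, and the eigenvalues of $R$ are $0$ and $\operatorname{tr}R$. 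Since $\alpha\mu$ solves \eqref{eq} and, by $\ddd\mu\ne 0$, is nonzero on a dense set, $\alpha\mu$ must be the nonzero eigenvalue, i.e.\ the $(2,2)$-entry of $R$ equals $\alpha\mu$ identically. This already yields \eqref{prep2}, with $h$ the $(1,2)$-entry of $R$.

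It remains to identify the linear part of $h$. The $1$-jet of $R$ at $\mathrm p$ carries the coordinate-independent left-symmetric algebra $\mathfrak b_{1,\alpha}$; writing the linear part of $h$ as $a\lambda+b\mu$, comparison of the structure constants of $\left(\begin{smallmatrix}0 & a\lambda+b\mu\\ 0 & \alpha\mu\end{smallmatrix}\right)$ with those of the model operator of $\mathfrak b_{1,\alpha}$ from \eqref{twodim} forces $a=1$ while leaving $b$ free. The harmless substitution $\lambda\mapsto\lambda+\tfrac{b}{1-\alpha}\mu$ (legitimate since $\alpha$ is irrational, and leaving $\mu$ intact) cancels the $b\mu$ term, so $h=\lambda+\{\text{terms of order}\ge 2\}$, which is exactly \eqref{prep2}. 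The only step carrying genuine content is the passage from the formal identity $R\partial_\lambda=0$ to the analytic one in the second paragraph: that is where analyticity of $\mu$ --- provided by Artin's theorem applied to \eqref{eq} before the lemma --- is essential; everything else is bookkeeping.
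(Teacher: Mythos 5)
Your argument is correct, and it rests on the same two pillars as the paper's proof --- the formal normal form of Proposition \ref{p1} and the principle that an analytic object with vanishing Taylor series vanishes --- but you organize it differently. The paper first proves, via the explicit conjugation \eqref{chng}, that the form \eqref{prep2} is stable under (not necessarily analytic) coordinate changes $\bar\lambda=g(\lambda,\mu)$, $\bar\mu=\mu$, and then reads the form off the formal chart $(x,\tfrac1\alpha q(y))$; this has the advantage that the linear part $h=\lambda+\dots$ drops out of the transformation formula for free, since the new $(1,2)$-entry is $\pd{g}{\lambda}h+\alpha\pd{g}{\mu}\mu$ and in the formal chart $h=\alpha p/q'=x+\dots$. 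You instead establish each feature of \eqref{prep2} separately and invariantly: $\ker(R-\lambda_0\operatorname{Id})\supseteq\ker\ddd\mu$ as a formal identity upgraded to an analytic one (your second paragraph makes explicit the ``zero jet implies zero'' step that the paper leaves implicit), the factorization $\alpha\mu(\alpha\mu-\operatorname{tr}(R-\lambda_0\operatorname{Id}))=0$ in the integral domain of analytic germs for the $(2,2)$-entry, and the isomorphism class of the left-symmetric algebra for the coefficient $a=1$ (a short computation with the left annihilator confirms your claim that $a$ is forced while $b$ is free). One small point, present equally in the paper's own statement: your closing substitution $\lambda\mapsto\lambda+\tfrac{b}{1-\alpha}\mu$ concedes that in an \emph{arbitrary} $\mu$-adapted chart $h$ may carry a linear $\mu$-term (the paper's formula \eqref{chng} shows the same, since a generic $g$ with $\pd{g}{\mu}\neq 0$ reintroduces such a term), so what is literally proved is the existence of a chart with $h=\lambda+\dots$ together with the invariance of the triangular form under all $\mu$-preserving changes --- which is precisely what the remainder of the paper uses.
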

\begin{proof}
First, let us show that, if $R$ is in the form \eqref{prep2} in one coordinate
system $\lambda, \mu$, then it has the same form in all such systems. Indeed,
consider $\bar \lambda = g(\lambda, \mu), \bar \mu = \mu$. The operator field
$R$ is transformed by the rule
\begin{equation}\label{chng}
\begin{aligned}
     \left(\begin{array}{cc}
        \pd{g}{\lambda} & \pd{g}{\mu}  \\
         0 & 1
    \end{array}\right)
    \left(\begin{array}{cc}
0 & h(\lambda, \mu) \\
0 & \alpha \mu \\
\end{array}\right)& \left(\begin{array}{cc}
        \frac{1}{\pd{g}{\lambda}} & - \frac{\pd{g}{\mu}}{\pd{g}{\lambda}}  \\
         0 & 1
    \end{array}\right) = \left(\begin{array}{cc}
0 & \pd{g}{\lambda} h(\lambda, \mu) + \alpha \pd{g}{\mu} \mu  \\
0 & \alpha \mu \\
\end{array}\right).
\end{aligned}
\end{equation}
Note that the coordinate change does not need to be analytic. That is, if we
have coordinates $\lambda, \mu$ in which the operator field $R$ is in the form
\ref{prep2}, then, taking the inverse change, we see that it was in the same
form in the initial coordinates.

Now, consider the formal coordinate change that exists due to Proposition
\ref{p1}. Consider the series $q(y)$. By construction, its Taylor decomposition
is in the form $q(y) = \alpha y + \dots$. Taking $\mu = \frac{1}{\alpha} q(y(u,
v))$ and $\lambda = x(u, v)$ and performing the same calculations as above, we
see that $R$ is in the form \eqref{prep2}. Thus, the lemma is proved.
\end{proof}

Lemma \ref{lm1} reduces the linearization problem to the linearization in
specific coordinates. Thus, we may assume that, in coordinates $u, v$, the
operator field $R$ has the form \eqref{prep2}. If a linearizing coordinate
change $x(u, v), y(u, v)$ exists, then we have $y = v =
\frac{1}{\alpha}\operatorname{tr} R$. Thus, the linearization transformation in
these coordinates is always triangular: $x = g(u, v)$ and $y = v$.

\begin{Proposition}\label{p2}
There exists an analytical function $h(u, v) = u + \dots$ such that the vector
field $\xi = (h(u, v), \alpha v)$ is not linearizable by the coordinate
transformations $x = g(u, v)$ and $y = v$ in a neighborhood of the critical
point $u = v = 0$.
\end{Proposition}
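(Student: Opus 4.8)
\emph{Proof strategy.} The plan is to translate the linearization of $\xi$ into the linearization of the holonomy (monodromy) of its separatrix, and then to invoke the optimality of Brjuno's arithmetic condition due to Yoccoz. Throughout I pass to the complexification $\xi_{\mathbb{C}}$ on a neighbourhood of $0\in\mathbb{C}^2$; this loses nothing, because $\alpha<0$ is irrational, so the linear part has no resonances and a normalizing transformation tangent to the identity, if it exists, is unique, hence coincides with its own complex conjugate. Consequently real-analytic linearizability of $\xi$ is equivalent to complex-analytic linearizability of $\xi_{\mathbb{C}}$. As in formula~\eqref{chng} and the discussion after Lemma~\ref{lm1}, a triangular change $x=g(u,v),\ y=v$ replaces $h$ by $h\,\pd{g}{u}+\alpha v\,\pd{g}{v}$, so linearizing $\xi$ (equivalently $R$) is the same as solving $h\,\pd{g}{u}+\alpha v\,\pd{g}{v}=g$ with analytic $g=u+\{\text{higher order}\}$, and this in turn is equivalent to the foliation of $\xi_{\mathbb{C}}$ being linearizable.

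First I would record the elementary geometry near $0\in\mathbb{C}^2$. The line $\{v=0\}$ is invariant. Since $\alpha<0$, the only small-divisor expressions that can occur in the construction of the second separatrix are $1-k\alpha=1+k|\alpha|$ and $k-\alpha$, which are bounded away from $0$; hence that separatrix is analytic, and after straightening it to $\{u=0\}$ by a triangular change and multiplying $\xi_{\mathbb{C}}$ by a suitable nonvanishing analytic factor one reaches \emph{exactly} the form $(h,\alpha v)$ with $h=u+\{\text{higher order}\}$. Equivalently, every germ of a holomorphic foliation with a non-degenerate singularity of eigenvalue ratio $\alpha$ having the two coordinate axes as separatrices is given by a field of this precise shape, and triangular changes preserve this class.

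Next I would pass to the holonomy of $\{v=0\}$: fixing a base point $u_0\neq0$ and the transversal $\{u=u_0\}$ with coordinate $v$, the loop $u=u_0 e^{2\pi i t}$ induces a germ $F\in\operatorname{Diff}(\mathbb{C},0)$ with $F'(0)=e^{2\pi i\alpha}$. One implication is immediate, and it is all I need: if $\xi$ is linearizable, its foliation is the linear pencil $v=Cu^{\alpha}$, whose separatrix holonomy is the rotation $v\mapsto e^{2\pi i\alpha}v$, which is linear; so it suffices to produce a real-analytic $h=u+\{\text{higher order}\}$ whose holonomy $F$ is \emph{not} analytically linearizable. For this I would use the standard suspension construction: any germ $F_0\in\operatorname{Diff}(\mathbb{C},0)$ with $F_0'(0)=e^{2\pi i\alpha}$ is the $\{v=0\}$-holonomy of a germ of a holomorphic foliation at $0$ with a non-degenerate singularity of ratio $\alpha$, and by the previous paragraph this foliation is defined by a field $(h,\alpha v)$, $h=u+\{\text{higher order}\}$. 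Since $\alpha\in\Sigma_{\mathrm u}$ is a non-Brjuno number — and the Brjuno sum depends only on the tail of the continued fraction, hence only on $\alpha\bmod 1$ — Yoccoz's theorem provides germs $F_0$ with multiplier $e^{2\pi i\alpha}$ (for instance a quadratic one) that are not analytically linearizable. Realizing such an $F_0$ produces the desired $h$, which proves Proposition~\ref{p2} and hence Theorem~\ref{t1}.

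The one genuine obstacle is the last step done in the \emph{real}-analytic category. The complexified holonomy of a real field $(h,\alpha v)$ is constrained by $\overline{F(\bar v)}=F^{-1}(v)$, since conjugation fixes the separatrix but reverses the loop; so one must exhibit a non-linearizable $F_0$ obeying this symmetry. I would obtain it by setting $F_0=Q\circ\bar Q^{-1}$, where $\bar Q(v):=\overline{Q(\bar v)}$ and $Q$ is a germ with $Q'(0)=e^{\pi i\alpha}$: this automatically forces $F_0'(0)=e^{2\pi i\alpha}$ and $\overline{F_0(\bar v)}=F_0^{-1}(v)$, and one then arranges non-linearizability of $F_0$ — either by importing Yoccoz's quantitative bound on the linearization radius into this symmetric family, or by a Baire-category argument inside it. The suspension of such an $F_0$ carries the induced anti-holomorphic involution, hence descends to a real-analytic field, and one checks that straightening the second separatrix and normalizing the second component to $\alpha v$ can be carried out equivariantly, so that the resulting $h$ is real-analytic of the form $u+\{\text{higher order}\}$. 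Verifying these equivariance and normal-form claims, together with the non-linearizability of $F_0$ within the symmetric family, is where the real work lies.
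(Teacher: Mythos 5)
Your argument follows essentially the same route as the paper: reduce linearizability of $\xi$ to linearizability of the holonomy of the separatrix $\{v=0\}$, take a non-linearizable germ with multiplier $e^{2\pi i\alpha}$ supplied by Yoccoz's theorem \cite{yoccoz2} for non-Brjuno $\alpha$, realize it as the holonomy of a vector field with linear part $\mathrm{diag}(1,\alpha)$ via the P\'erez-Marco--Yoccoz realization theorem \cite{yoccoz3}, normalize that field to the shape $(h,\alpha v)$ (the paper quotes Lemma~3.1 of \cite{ie} here, you argue it by straightening the second separatrix and rescaling), and conclude from the fact that orbital equivalence conjugates holonomies (Theorem~1 of \cite{ie}). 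The one place you genuinely diverge is your final paragraph on the real form: the paper complexifies the system and never returns to the real category, so it does not address the constraint $\overline{F(\bar v)}=F^{-1}(v)$ that the holonomy of a real-analytic field must satisfy. Your observation is legitimate, and your ansatz $F_0=Q\circ\bar Q^{-1}$ does produce germs with the correct symmetry and multiplier; but you explicitly leave unproven both that a non-linearizable $F_0$ exists inside this symmetric family and that the realization and normalization can be performed equivariantly. As written, your proof is therefore complete only under the complex-analytic reading of the statement --- which is the reading under which the paper's own proof in fact operates --- while in the real-analytic reading both your text and the paper's contain the same outstanding step, which you at least identify and localize.
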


\begin{proof}
First, let us recall some basic facts and notions from the theory of ODEs (we
follow \cite{iy}). First, we complexify the system
\begin{equation}\label{sys}
\begin{aligned}
 & \dot u = h(u, v), \\
 & \dot v = \alpha v.
\end{aligned}
\end{equation}
If there exist finitely many solutions which can be continued into a critical
point, then they are called separatrices.

The second equation of \eqref{sys} can be solved, that is, $v(t) = c_1
e^{\alpha t}$. Substituting this into the first equation, we obtain a
one-dimensional nonstationary system $\dot u = h(u, t)$. Thus, in our case,
there are two separatrices, each of which is a complex punctured coordinate
line $u = 0$ and $v = 0$.

Given a separatrix $S$, one can define a mononodromy mapping. Choose a closed
curve $\gamma(t), t \in [0, 1]$ in $S$, which is $\mathbb C = \mathbb R$
without a point, which encompasses the coordinate origin in the positive
direction exactly once. For every point of $\gamma(t)$, choose a transversal
space $T_t$ to $S$, which at least continuously depend on the parameter $t$
with the property $T_0 = T_1$. In our case, $T_t = \mathbb R^2 = \mathbb C$ for
all $t$. Due to the construction, $\gamma(0) \in T_0$.

Due to the continuous dependence of the solution on the initial condition for
any sufficiently small neighborhood $U(\gamma(0)) \in T_0$, there exists a
curve $\bar \gamma (t)$ (not necessary closed!), parametrized by $t \in [0, 1]$
as well with all points, except $\bar \gamma(0), \bar \gamma(1)$, having a
unique projection to $\gamma(t)$. The point $\bar \gamma(1)$ is called the
image of $\bar \gamma(0)$ from $U(\gamma(0)) \in T_0$. One can show that the
image depends on the homotopy class of the curve rather than on the initial
representative. At the same time, there is a dependence on the choice of the
initial point $\gamma(0)$.

Two vector fields $\xi$ and $\bar \xi$ in a neighborhood of a critical point
are called orbitally equivalent if there exists an analytic coordinate change
$\bar x(x, y), \bar y(x, y)$ such that, in these coordinates, $\xi = f \bar
\xi$ for some analytic function~$f$. If a vector field is linearizable, it is
orbitally equivalent to a linear one.

Now we are ready to prove the proposition. The main theorem of \cite{yoccoz2}
implies that, for $\alpha \in \Sigma_{\mathrm u}$, there exists an analytical
transformation of the complex line in the form
$$
q(z) = \operatorname{exp}(2 \pi \alpha) z + \{\text{higher order terms}\},
$$
which cannot be linearized with analytical coordinate changes. Take this
transformation. The theorem of \cite{yoccoz3} implies that any such
transformation $q(z)$ can be realized as a monodromy mapping for some system of
the form (the system in \cite{yoccoz3} has different sign in front of $u$)
\begin{equation}\label{sys2}
\begin{aligned}
\dot u & = u (1 + \dots), \\
\dot v & = \alpha v (1 + \dots).
\end{aligned}
\end{equation}
Lemma 3.1 in \cite{ie} implies that \eqref{sys2} is orbitally analytically
equivalent to \eqref{sys}.

At the same time, it follows from Theorem 1 in \cite{ie} that  if system
\eqref{sys2} is orbitally equivalent to a linear one, then the monodromy
mappings are conjugate by some transformation. Due to the choice of $q(z)$, we
see that \eqref{sys2} is not orbitally analytically equivalent to a linear
system. Thus, system \eqref{sys} is not equivalent to a linear system and, in
particular, it is not linearizable. The proposition is proved.
\end{proof}

Now we are ready to prove Theorem \ref{t1}. Take an operator field in the form
\eqref{prep2}, where $h(u, v)$ is taken from the statement of Proposition
\ref{p2}. Due to formula \eqref{chng}, the last column of this operator behaves
as a vector field under the coordinate transformation in the triangular form.

The linearization transformation in these coordinates is always triangular;
thus, it exists if and only if the corresponding vector field is linearizable.
Due to the choice of $h(u, v)$, no such analytic coordinate change exists;
thus, the operator field is not linearizable with triangular coordinate change.

Finally, as was mentioned, Lemma \ref{lm1} implies that any $R$ with $\alpha
\in \Sigma_{\mathrm u}$ can be brought to the form \ref{prep2}. Thus, the
constructed operator field $R$ has no linearization coordinates in general form
either. The theorem is proved.

\end{fulltext}

\end{document}